\documentclass[11pt,reqno]{amsart}
\usepackage[margin=0.95in]{geometry}
\usepackage{amssymb,amsfonts,amsmath,mathrsfs,etoolbox,mathtools,bm}
\usepackage[noadjust]{cite}
\usepackage[dvipsnames]{xcolor}
\usepackage[breaklinks=true, linktocpage=true,
colorlinks=true,linkcolor=teal,citecolor=teal,filecolor=RoyalBlue,urlcolor=BlueViolet]{hyperref}
\hypersetup{linktocpage}
\usepackage[capitalise]{cleveref}
\usepackage[inline]{enumitem}  
\usepackage{mathtools}
\usepackage{comment}
\usepackage{soul}
\usepackage{tikz}
\usepackage{tikz-cd}
\usetikzlibrary{shapes.geometric}
\usetikzlibrary{shapes.misc}
\usetikzlibrary{positioning}
\allowdisplaybreaks[4]
\usepackage{hypbmsec}
\newtheorem{theorem}{Theorem}[section]

\newtheorem{proposition}[theorem]{Proposition}
\usepackage[utf8]{inputenc}
\newtheorem{lemma}[theorem]{Lemma}

\newtheorem{conjecture}[theorem]{Conjecture}

\newtheorem{remark}[theorem]{Remark}
\newtheorem{definition}[theorem]{Definition}
\newtheorem{question*}{Question}

\theoremstyle{definition}

\newtheorem{example}[theorem]{Example}

\newtheorem{notation}[theorem]{Notation}

\numberwithin{equation}{section}

\newcommand{\Z}{\mathbb{Z}}
\newcommand{\R}{\mathbb{R}}
\newcommand{\Q}{\mathbb{Q}}

\renewcommand{\tilde}{\widetilde}

\renewcommand{\bar}{\overline}

\renewcommand{\Im}{\mathrm{Im}}

\renewcommand{\pmod}[1]{\, (\mathrm{mod} {\, #1})}

\renewcommand{\Pr}{\mathbb{P}}

\renewcommand{\Re}{\mathrm{Re}}

\usepackage{tikz}
\usetikzlibrary{calc}

\newcommand{\cV}{\mathcal{V}}

\patchcmd{\section}{\scshape}{\bfseries}{}{}
\makeatletter
\renewcommand{\@secnumfont}{\bfseries}
\makeatother

\makeatletter\newcommand{\tpmod}[1]{{\@displayfalse\pmod{#1}}}
\makeatletter
\@namedef{subjclassname@2020}{\textup{2020} Mathematics Subject Classification}
\makeatother

\begin{document}

\title{Real Geometric transcendence for the Gamma function}

\author{Arshay Sheth}
\address[Arshay Sheth]{School of Mathematics, Tata Institute of Fundamental Research, Homi Bhabha Road,
Mumbai - 400005, India.}
\urladdr{}

\thanks{}

\email{asheth@math.tifr.res.in}

\begin{abstract}
We show that the  $x$-axis is the only real algebraic curve in $\mathbb R^2$ whose image via the Gamma function is contained in an algebraic curve. Our proof employs an elegant base-change argument due to Tamiozzo (2023) to  deduce the result from the corresponding complex geometric transcendence result of Eterovi\'c, Padgett and Zhao (2025). As an application, we use the complex and real geometric transcendence results to study analogues of the Manin--Mumford conjecture for the Gamma function. 
\end{abstract}

\maketitle

\section{Introduction} \label{intro}

If $\Omega: X \rightarrow Y$ is a transcendental map between two complex algebraic varieties, the image of a generic algebraic subvariety of $X$ will usually not be an algebraic subvariety of $Y$; it is thus often of special interest to characterise the relevant \textit{bialgebraic varieties} for the map $\Omega$, \textit{i.e.},  those varieties $V \subseteq X$ such that $\Omega(V)$ is also algebraic. 
For instance, letting $\exp(z):=e^{2 \pi i z}$, the Ax--Lindemann--Weierstrass theorem asserts that irreducible algebraic subvarieties of $\mathbb C^n$ whose image via the map 
\begin{align*}
\mathbb C^n & \rightarrow (\mathbb C^\times)^n \\
(z_1, \ldots, z_n) & \mapsto (\exp(z_1), \ldots ,\exp(z_n))
\end{align*}
is algebraic, are precisely translates of linear subspaces of $\mathbb C^n$ defined over $\mathbb Q$. Similar results are known, for instance, for the modular $j$-function and for Weierstrass elliptic functions. 
The problem of determining the relevant bialgebraic varieties in a given situation 
can be thought of as a problem of  understanding how the algebraic structures on both the domain and the codomain interact under the map $\Omega$; this problem can 
also be regarded as a geometric analogue of a fundamental problem in transcendental number theory where, given a holomorphic transcendental function $f$, one is interested in determining all the \textit{bialgebraic numbers} with respect to $f$,  \textit{i.e.}, all algebraic numbers whose values under $f$ remain algebraic. 

\subsection{Complex geometric transcendence for the $\Gamma$-function}

The focus of the present paper is to explore the geometric transcendence properties of the celebrated Gamma function, which is defined by 
$\begin{displaystyle}
\Gamma(z)= \int_{0}^{\infty} e^{-t} t^{z-1} dt    \hspace{4mm} \text{ for } \Re(z)>0 \end{displaystyle}$ and admits a meromorphic continuation to the entire complex plane with simple poles at the non-positive integers $\mathbb Z_{\leq 0}$. 

\begin{theorem}[Eterovi\'c--Padgett--Zhao \cite{EPZ25}] \label{complexgt}

Consider the function
\begin{align*}
\Gamma^2: (\mathbb C  \smallsetminus \mathbb Z_{\leq 0})^{2} & \longrightarrow \mathbb C^2 \\ 
(z_1, z_2) & \mapsto (\Gamma(z_1), \Gamma(z_2)).
\end{align*}
Suppose $C \subseteq \mathbb C^2$ is an irreducible algebraic curve such that $\Gamma^2(C \cap (\mathbb C\smallsetminus \mathbb Z_{\leq 0})^2)$ is contained in an algebraic curve. Then $C$ is defined by one of the following equations:
\begin{enumerate}
\item $X=Y$;
\item  $X=w$ for some $w \in \mathbb C$; 
\item $Y=w$ for some $w \in \mathbb C$. 
\end{enumerate}
\end{theorem}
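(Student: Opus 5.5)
The overall strategy is to reduce the statement to a functional-transcendence statement for $\Gamma$ along an algebraic curve, and then rule out every curve other than the three listed ones by combining the differential-algebraic nature of $\Gamma$ with its growth along the curve. Suppose $C$ is irreducible, not of type (2) or (3), and $P(\Gamma(z_1),\Gamma(z_2))=0$ on $C$ for some nonzero polynomial $P$. Since $C$ is not a vertical or horizontal line, locally on $C$ we can write $z_2=g(z_1)$, where $g$ is a nonconstant algebraic function. The relation becomes $P(\Gamma(z),\Gamma(g(z)))=0$, so $\Gamma\circ g$ is algebraic over $\mathbb C(z,\Gamma(z))$. The aim is to show this forces $g(z)=z$.

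The first key step is to exploit Hölder's theorem in its strengthened differential form: $\Gamma$ satisfies no algebraic differential equation over $\mathbb C(z)$. More usefully, one can work with $\psi=\Gamma'/\Gamma$ and use Ax–Schanuel-type results for $\Gamma$; the Eterovi\'c–Padgett–Zhao line of work proves such statements. Concretely, I would differentiate the relation along $C$. This shows that the field $\mathbb C(z,\Gamma(z),\Gamma(g(z)))$ has transcendence degree at most $2$ over $\mathbb C$. It also shows that the derivatives $\Gamma^{(k)}(z)$ and $\Gamma^{(k)}(g(z))$ are all algebraic over a finitely generated field of low transcendence degree. On the other hand, an Ax–Schanuel theorem for $\Gamma$ would assert the following: if $g(z)\neq z$ and $g$ is algebraic, then $\Gamma(z),\Gamma'(z),\dots$ and $\Gamma(g(z)),\Gamma'(g(z)),\dots$ are as algebraically independent as allowed, because $\Gamma$ has no nontrivial algebraic correspondences other than the identity. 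I would try to prove this independence directly via Kolchin/Ostrowski-style arguments on the differential field generated by $\psi(z)$ and $\psi(g(z))$.

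A complementary, more elementary route uses analytic growth. Consider the translation structure: the relation $\Gamma(z+1)=z\Gamma(z)$ and the poles at $\mathbb Z_{\le 0}$. As $z_1\to -n$ along $C$, the function $\Gamma(z_1)$ has a pole. The algebraic relation $P$ then forces $\Gamma(z_2)$ to be either unbounded, which requires $z_2$ to approach a pole, or bounded at a controlled algebraic rate. By letting $n$ vary, one sees that the algebraic correspondence $C$ must map infinitely many non-positive integers to non-positive integers, with matching pole orders. Combining this with Stirling asymptotics along branches of $C$ at infinity, where $\log\Gamma(z)\sim z\log z$, pins down the branch. Comparing the growth of $g(z)\log g(z)$ with $z\log z$ up to $O(\log|z|)$ forces $g(z)=z+c$ asymptotically. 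Then, for $g(z)=z+c$, the function $\Gamma(z+c)/\Gamma(z)$ is algebraic over $\mathbb C(z,\Gamma(z))$ only if $c=0$. For $c\in\mathbb Z$ the quotient lies in $\mathbb C(z)$, which is nontrivial but still gives an algebraic curve. I must therefore check whether translates $Y=X+n$ really are excluded. Since $\Gamma(z+n)=(z)_n\Gamma(z)$, the image set $\{(\Gamma(z),(z)_n\Gamma(z))\}$ is two-dimensional in general, so the exclusion plausibly holds. For non-integer $c$, the exclusion follows from the pole structure.

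The main obstacle is the transcendence input: proving rigorously that $\Gamma(g(z))$ is not algebraic over $\mathbb C(z,\Gamma(z))$ for $g\ne\mathrm{id}$. In particular, the multivalued algebraic $g$ and the asymptotic branch analysis need care. I would first attempt the pole-matching argument, since it most directly uses the special features of $\Gamma$.
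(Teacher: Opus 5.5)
The paper does not prove this statement. It imports it from Eterovi\'c--Padgett--Zhao, whose proof (as the introduction notes) is a direct complex-analytic study of the fibres of $\Gamma$, not an Ax--Schanuel or o-minimal argument. Measured against that, your proposal is a heuristic outline with the decisive steps missing.

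\textbf{The differential-algebraic route does not work as set up.} Reducing to ``$\Gamma\circ g$ is algebraic over $\mathbb C(z,\Gamma(z))$'' discards the fact that $P$ has constant coefficients. Under that weaker hypothesis the conclusion is false, since $\Gamma(z+1)=z\Gamma(z)$. So the principle you invoke (no algebraic correspondences for $\Gamma$ other than the identity) is wrong over $\mathbb C(z)$. Differentiating the relation also does not place the $\Gamma^{(k)}$ in a field of low transcendence degree. By H\"older's theorem, $\Gamma,\Gamma',\Gamma'',\dots$ are algebraically independent over $\mathbb C(z)$. There is therefore no finite-dimensional differential structure for Kolchin/Ostrowski-type arguments to act on, and the ``independence'' you propose to prove is just the theorem restated.

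\textbf{In the analytic route, the key step is asserted rather than proved.} If $(z_1,z_2)\to(-n,w)$ along $C$ with $w\notin\mathbb Z_{\le 0}$, the relation only forces $\Gamma(w)$ into the finite set $F$ of roots of the leading coefficient of $P$ in $X$. But each fibre $\Gamma^{-1}(c)$ with $c\neq 0$ is infinite: it contains a point within about $1/(k!\,|c|)$ of $-k$ for every large $k$. So ``$w\in\mathbb Z_{\le 0}\cup\Gamma^{-1}(F)$ for every large $n$'' does not imply that $C$ maps infinitely many non-positive integers to non-positive integers. Ruling out that the branches of $C$ over $X=-n$ land on these nearby fibre points is exactly the delicate fibre analysis EPZ carry out, and you supply nothing in its place.

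\textbf{The Stirling step is also insufficient.} It only compares moduli, and only along branches where both $\Gamma(z_1)$ and $\Gamma(z_2)$ tend to $\infty$. At best it gives asymptotic information. For a branch $g(x)\sim bx$ as $x\to+\infty$ with $\Gamma(g(x))\sim a\Gamma(x)^r$, you get only $\Re b=r$ and $\Re(b\log b)=0$, which do not force $b=1$. Even ``$g(z)=z+c+o(1)$'' would not make $C$ a line: the irreducible curve $XY=X^2+cX+1$ has the branch $Y=X+c+1/X$. Hence your final case analysis, which treats only exact translates $Y=X+c$, does not cover a general $C$.

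\textbf{The exact-translate exclusions are correct and easy to finish.}
For $c\notin\mathbb Z$, the values $\Gamma(c-n)$ are nonzero and tend to $0$, so infinitely many distinct values would lie in the finite set $F$.
For $c\in\mathbb Z\smallsetminus\{0\}$, write $\Gamma(z+c)=R(z)\Gamma(z)$ with $R\in\mathbb C(z)$ nonconstant. Fix a nonzero value $a$ of $\Gamma$. The infinitely many $z$ in $\Gamma^{-1}(a)$ give infinitely many roots $R(z)a$ of $P(a,\cdot)$, so $P(a,\cdot)\equiv 0$ for infinitely many $a$, which forces $P=0$.

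The missing idea is a rigorous analysis of the fibres of $\Gamma$ that turns the algebraic relation into an exact constraint on $C$, rather than an asymptotic one on a single branch.
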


The situation of geometric transcendence for the $\Gamma$ function differs from those of the exponential function, the $j$-function and the Weierstrass elliptic functions, since unlike the latter three functions, $\Gamma$ does not satisfy an algebraic differential equation by H\"older's theorem. Indeed, the proof of Theorem \ref{complexgt} does not use o-minimal methods, but rather proceeds by complex analytic computations describing the
behavior of the fibers of $\Gamma$. We refer to \cite{DVP25, EP25} for further progress on the topic of geometric and functional transcendence for the Gamma function.

\begin{remark}
The varieties appearing in Theorem \ref{complexgt} (and their natural generalisations to higher dimensions) are called \textit{trivially bialgebraic}  in the notation of \cite{EPZ25}; indeed, they are always bialgebraic for the $n$-fold product of any set-theoretic function. Theorem \ref{complexgt} is a special case of the main theorem of \cite{EPZ25}, which deals with the map  
\begin{align*}
\Gamma^n: (\mathbb C  \smallsetminus \mathbb Z_{\leq 0})^{n} & \longrightarrow \mathbb C^n \\ 
(z_1, \ldots ,z_n) & \mapsto (\Gamma(z_1), \ldots,  \Gamma(z_n))
\end{align*}
for any $n \geq 2$. If $V \subseteq \mathbb C^n$ is an irreducible algebraic subvariety such that the dimension of the Zariski closure of $\Gamma^n(V \cap (\mathbb C  \smallsetminus \mathbb Z_{\leq 0})^{n})$ equals the dimension of $V$,  the authors of \cite{EPZ25} show that $V$ is trivially bialgebraic. 
\end{remark}

\subsection{Real geometric transcendence} 
One can investigate  analogues of all of the above results in the setting of real algebraic geometry, \textit{i.e.}, by regarding the relevant spaces as real algebraic varieties. The study of real analogues of complex geometric transcendence results was initiated by Tamiozzo in \cite{Tam23}, where he proved real geometric transcendence results for the exponential and the $j$-function. The results obtained in these cases have interesting connections to various arithmetic objects, such as class numbers of real quadratic fields and special geodesics in the upper half-plane.  The techniques of \cite{Tam23} were extended by the author and Tamiozzo in \cite{ST25} to deal with the case of the Weierstrass elliptic functions and, in current work in progress, to deal with  uniformisation maps of higher genus curves.  In this paper, we prove the real analogue of Theorem \ref{complexgt}.  Let $S$ denote the subset $\mathbb Z_{\leq 0} \times \{0\}$ of $\R^2$;  by making the natural identification $\mathbb R^2 \simeq \mathbb C$, we are led to consider the function
\begin{align*}
    G : \mathbb R^2  \smallsetminus S  & \rightarrow \R^2\\
    (x, y) & \mapsto (\Re (\Gamma(x+iy)), \Im (\Gamma(x+iy))).
\end{align*}

While studying geometric transcendence results in the real setting, it is often fruitful to weaken the bialgebraicity property by requiring the relevant images of an algebraic subvariety in the domain to be contained in (as opposed to being equal to) an algebraic subvariety in the codomain; this intuition is formalised in the following definition. 

\begin{definition}\label{bialgd}
A non-empty subset $\cV \subsetneq \mathbb R^2$ is called a weakly bialgebraic curve for $G$ if 
\begin{enumerate}
\item $\cV$ is not a singleton and there exist algebraic subvarieties $V \subset \mathbb A^2_\mathbb R$ and $W \subsetneq \mathbb A^2_\mathbb R$ such that $\cV=V(\mathbb R)$ and  $G(\cV \cap (  \mathbb R^2  \smallsetminus S)) \subset W(\mathbb R)$;
\item the set $\cV$ cannot be written in the form $V_1(\mathbb R) \cup V_2(\mathbb R)$, where $V_1, V_2 \subset \mathbb A^2_\mathbb R$ are algebraic subvarieties and the inclusions $V_i(\mathbb R)\subset \cV$ are proper for $i=1, 2$. 
\end{enumerate}
\end{definition}

\begin{theorem}\label{mainthm}
Suppose $\mathcal V \subset \mathbb R^2$ is a weakly bialgebraic curve for $G$. Then $\mathcal V$ must be the $x$-axis. 
\end{theorem}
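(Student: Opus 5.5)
Our plan is to reduce to Theorem \ref{complexgt} via Tamiozzo's base-change trick. First, the $x$-axis works: $\Gamma$ is real on $\mathbb R\smallsetminus\mathbb Z_{\leq 0}$, so $G$ maps the $x$-axis minus $S$ into $\{Y=0\}$. Conversely, let $\cV=V(\mathbb R)$ be weakly bialgebraic, with $G(\cV\cap(\mathbb R^2\smallsetminus S))\subset W(\mathbb R)$. Condition (2) of Definition \ref{bialgd} makes $\cV$ irreducible as a real algebraic set. Since $\cV$ is not a singleton, and $W\subsetneq \mathbb A^2_\mathbb R$ while $G$ is a nonconstant real-analytic map that is open off the critical points of $\Gamma$, $\cV$ has dimension exactly $1$ (not $2$, and not $0$). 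So we may take $V$ to be an irreducible real curve, the Zariski closure of $\cV$, with $V(\mathbb R)$ Zariski dense in $V$. We may likewise replace $W$ by a curve $W=\{Q=0\}$ with $Q\in\mathbb R[X,Y]$.

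Next we complexify. Let $P\in\mathbb R[x,y]$ define $V$ and $Q$ define $W$. Near a smooth real point of $\cV$ away from $S$ and from the poles, parametrize $\cV$ real-analytically by $t\mapsto (x(t),y(t))$. The identity $Q(\Re\Gamma(x+iy),\Im\Gamma(x+iy))=0$ holds along it. Write $z=x+iy$ and $\bar z=x-iy$, and substitute $\Re\Gamma(z)=\tfrac12(\Gamma(z)+\Gamma(\bar z))$ and $\Im\Gamma(z)=\tfrac1{2i}(\Gamma(z)-\Gamma(\bar z))$, using $\overline{\Gamma(z)}=\Gamma(\bar z)$. Then the complex curve
\[
C=\{(z_1,z_2)\in\mathbb C^2: P(\tfrac{z_1+z_2}{2},\tfrac{z_1-z_2}{2i})=0\}
\]
satisfies $\tilde Q(\Gamma(z_1),\Gamma(z_2))=0$ on a real-analytic arc of $C$, namely the points $(z,\bar z)$ with $z\in\cV$, where $\tilde Q(u,v)=Q(\tfrac{u+v}2,\tfrac{u-v}{2i})$. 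This arc is a totally real $1$-dimensional piece of the complex curve $C$, so by the identity theorem the relation $\tilde Q\circ\Gamma^2=0$ propagates to the whole irreducible component $C_0$ of $C$ containing it, away from the poles. This is Tamiozzo's base-change step. The main obstacle is to check that the arc is Zariski dense in an irreducible component $C_0$ and that analytic continuation along $C_0$ minus the polar locus is legitimate; that set is connected because $C_0$ is irreducible and we remove only a discrete set. The Zariski density of the arc should come from that of $\cV$ in $V$, via the linear isomorphism $(x,y)\mapsto(z,\bar z)$ of $\mathbb A^2_\mathbb C$.

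Now Theorem \ref{complexgt} applies: $C_0$ is $X=Y$, $X=w$, or $Y=w$. Translating back through $z_1=x+iy$, $z_2=x-iy$:
\begin{itemize}
\item $z_1=z_2$ gives $y=0$, the $x$-axis.
\item $z_1=w$ would force $\cV$ to contain only the point $x+iy=w$ on a $1$-dimensional set, which is a contradiction; similarly for $z_2=w$.
\end{itemize}
Hence $V$ is the $x$-axis. Because $\cV=V(\mathbb R)$, we get $\cV=\{y=0\}$ exactly.
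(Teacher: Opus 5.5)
Your proposal is correct and follows essentially the same route as the paper: complexify via $(x,y)\mapsto(x+iy,x-iy)$, propagate the relation from a real arc of $\mathcal V$ to the whole complex curve by the identity theorem, apply Theorem \ref{complexgt}, and rule out the vertical and horizontal lines. The differences are cosmetic. You work with the component $C_0$ containing the arc and continue along its smooth locus minus the discrete polar set. The paper instead first shows $P$ is irreducible over $\mathbb C$, extends $\Gamma^2$ to a map into $\mathbb P^1(\mathbb C)^2$ so that poles need no special treatment, and concludes with the Grauert--Remmert thin-set argument.
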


The fact that the $x$-axis is indeed weakly bialgebraic follows from the relation
\begin{equation} \label{conj}
\bar{\Gamma(z)}= \Gamma(\bar{z}),  
\end{equation}
and we see that the image of the $x$-axis under $G$ is contained in the $x$-axis.  In Section \ref{mm}, we use Theorem \ref{complexgt} and Theorem \ref{mainthm} to study complex and real analogues of the Manin--Mumford conjecture in this setting.  To the best of our knowledge, this is the first instance in the literature where such analogues have been studied for the Gamma function. 

\subsection*{Acknowledgements} This article is an outgrowth of the author's work \cite{ST25}  with Matteo Tamiozzo. I would like to thank him for introducing me to the topic of geometric transcendence and for many helpful discussions.  I am also grateful to Sebastian Eterovi\'c, Adele Padgett, and Roy Zhao for helpful comments on a previous version of this article.

\section{Proof of Theorem \ref{mainthm}}

\subsection{Preliminaries}

Let $\mathcal V$ be a weakly bialgebraic curve for $G$. By considering sums of squares of polynomials, we can write $\mathcal V$ as the zero locus of a single polynomial $R \in \mathbb R[X, Y]$.  By Definition \ref{bialgd} (2), the set $\mathcal V$ must be the vanishing locus of some irreducible factor $P$ of $R$. 
Furthermore, since $\cV$ is infinite, the polynomial $P$ is also irreducible in $\mathbb C[X, Y]$ (otherwise, we could write $P=P_1 \bar{P_1}$ for an irreducible polynomial $P_1 \in \mathbb C[X, Y]$ that is not a scalar multiple of a real polynomial, and both $P_1$ and $\bar{P_1}$ would vanish on $\cV$, contradicting Bézout's theorem).

\subsection{The base-change diagram}

By Equation \eqref{conj}, 
$$
\Re (\Gamma(x+iy)) = \frac{\Gamma(x+iy)+\Gamma(x-iy)}{2}, \hspace{4mm} \Im (\Gamma(x+iy))=  \frac{\Gamma(x+iy)-\Gamma(x-iy)}{2i}. 
$$

We define maps 
\begin{equation*}
\begin{aligned}[t]
  f\colon & \mathbb C^2  \rightarrow \mathbb C^2 \\ & (v, w) \mapsto (v+iw, v-iw)
\end{aligned}
\quad \hspace{5mm} \quad
\begin{aligned}[t]
  g \colon & \mathbb C^2  \rightarrow \mathbb C^2 \\
 & (a, b) \mapsto \left ( \frac{a+b}{2}, \frac{a-b}{2i} \right ),
\end{aligned}
\end{equation*}

and thus obtain a commutative diagram 

\begin{center}
\begin{equation} \label{commdiagram}
\begin{tikzcd}
 \mathbb R^2  \smallsetminus S \arrow[r, "G"] \arrow[hookrightarrow, d] &[0.5em] \mathbb R^2 \arrow[hookrightarrow, r] & \mathbb C^2 \arrow[d, "\iota \circ g^{-1}"]\\
\mathbb C^2 \arrow[r, "f"]  & \mathbb C^2 \arrow[r, "\Gamma^2"] & \mathbb \Pr^1(\mathbb C)^2, 
\end{tikzcd}
\end{equation}
\end{center}

where the left vertical and the top right horizontal maps above are induced by the inclusion $\mathbb R \subseteq \mathbb C$, the map $\iota: \mathbb C^2 \rightarrow \mathbb P^1(\mathbb C)^2$ is given by the inclusion $\mathbb C \subseteq  \mathbb P^1(\mathbb C)$ on each component and where we let $\Gamma^2$ denote, using the same name, the natural extension of the map in Theorem \ref{complexgt}.  
Let $\tilde{\cV}=\cV \cap  (\mathbb R^2  \smallsetminus S)$. Since $\cV$ is weakly bialgebraic, there exists a non-zero polynomial $Q \in \mathbb R[X, Y]$ such that $G(\tilde{\cV})$ is contained in the real algebraic curve with equation $Q=0$. Let $C_Q$ be the complex plane curve with equation $Q=0$, and let $\hat{C}_Q\subset \Pr^1(\mathbb C)^2$ be the Zariski closure of $\iota \circ g^{-1}(C_Q)$. Let $C_P$ be the complex curve with equation $P=0$ and let $q=\Gamma^2 \circ f$. The commutativity of the diagram \eqref{commdiagram} implies that $A:=C_P \cap q^{-1}(\hat{C}_Q)$ contains $\tilde{\cV}$.

\subsection{Analytic continuation from the complexification of a real algebraic curve}
Viewing $C_P$ as a complex analytic space with respect to the Euclidean topology, we thus see that $A$ is an analytic subset of $C_P$.   
Using the fact that $A$ contains $\tilde{\cV}$, and hence segments of the real algebraic curve $\mathcal V$, we show below in Proposition \ref{lem:realcompbalg} that this condition is rigid enough to yield that $f(C_P)$ is bialgebraic for $\Gamma^2$, \textit{i.e.}, $f(C_P)$ satisfies the hypothesis in Theorem \ref{complexgt}. We shall employ the formalism of thin sets and a version of the identity principle from \cite{GR84}.

We recall (cf. \cite[page 132]{GR84}) that a closed subset $A$ of a complex analytic space $X$ is called \textit{thin} if every $p \in A$ has an open neighbourhood $U$ such that $A \cap U$ is contained in a nowhere dense analytic subset of $X$.
In particular, we will use in Proposition \ref{lem:realcompbalg} below that if a subset of a complex curve contains a non-empty open set (in the complex analytic topology), then it is not thin.   
We will also use  ~\cite[Theorem, page 168]{GR84}  
which implies that if $X$ is an irreducible complex plane curve, then every proper analytic set of $X$ is thin in $X$.

\begin{proposition} \label{lem:realcompbalg}
The curve $f(C_P)$ is bialgebraic for $\Gamma^2$.
\end{proposition}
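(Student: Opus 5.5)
The plan is to show that the analytic set $A = C_P \cap q^{-1}(\hat{C}_Q)$ is all of $C_P$, or more precisely all of $C_P \cap f^{-1}\bigl((\mathbb C\smallsetminus \mathbb Z_{\leq 0})^2\bigr)$. Once this is known, $\Gamma^2(f(C_P)\cap(\mathbb C\smallsetminus\mathbb Z_{\le 0})^2)$ lies in $\hat C_Q$. Removing the finitely many components of $\hat C_Q$ at infinity and intersecting with $\mathbb C^2$, we get that this image is contained in the affine curve $g^{-1}(C_Q)$. This is a genuine curve because $Q\neq 0$ and $g$ is a linear isomorphism. Since $f$ is also a linear isomorphism, $f(C_P)$ is an irreducible algebraic curve, because $P$ is irreducible over $\mathbb C$ by the preliminaries. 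Hence $f(C_P)$ satisfies the hypothesis of Theorem~\ref{complexgt}.

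To prove $A = C_P$, I will argue by contradiction using thinness. First, $C_P$ is an irreducible complex plane curve, so by \cite[Theorem, page 168]{GR84} any proper analytic subset of $C_P$ is thin. Second, $A$ is indeed an analytic subset of $C_P$, or of the open subset $C_P^\circ := C_P\cap f^{-1}((\mathbb C\smallsetminus\mathbb Z_{\le0})^2)$. Poles of $\Gamma$ are sent to $\infty\in\mathbb P^1$, so $q$ is holomorphic into $\mathbb P^1(\mathbb C)^2$ on all of $C_P$, and $q^{-1}(\hat C_Q)$ is locally the zero set of holomorphic functions.

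The key step is to show that $A$ is not thin. Since $\mathcal V$ is infinite and not a finite union of smaller real algebraic sets, it contains a smooth real-analytic arc $\gamma$ consisting of smooth points of $C_P$ and avoiding the finite set $S$. At a smooth point $p\in\gamma$, choose a local holomorphic coordinate $t$ on $C_P$ near $p$. This can be done by projecting to one coordinate axis, using that $\partial P/\partial X$ or $\partial P/\partial Y$ is nonzero. Because $P$ is real, the coordinate can be chosen real on $\gamma$, so that $\gamma$ corresponds to a real interval in the $t$-disc.

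Now $A$ is locally the common zero set of holomorphic functions $h(t)$, and each such $h$ vanishes on a real interval. By the one-variable identity theorem, each $h$ vanishes on the whole disc. Hence $A$ contains a nonempty open subset of $C_P$ and so is not thin. Therefore $A$ is not a proper analytic subset, and $A = C_P$.

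I expect the main obstacle to be this local step, namely producing a real arc of smooth points of $C_P$ and transferring the vanishing from the arc to a complex neighbourhood. The care needed is in checking that $\mathcal V$, which is the real zero locus of the irreducible $P$ and is infinite, has infinitely many nonsingular real points. This holds because the singular set of $C_P$ is finite and $\mathcal V$ is infinite. A smaller point is handling the chart of $\hat C_Q$ near points at infinity, but this is only needed locally on $\mathbb P^1(\mathbb C)^2$, and points of $\gamma$ map into $\mathbb C^2$ anyway.
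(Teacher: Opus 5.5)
Your proposal is correct and follows essentially the same route as the paper: pick a real arc of smooth points of $C_P$ inside $\tilde{\cV}$, use the identity theorem in a local holomorphic chart to show $A$ contains an open subset of $C_P$, conclude $A=C_P$ because proper analytic subsets of the irreducible curve $C_P$ are thin, and then read off that $\Gamma^2(f(C_P)\cap(\mathbb C\smallsetminus\mathbb Z_{\le 0})^2)\subseteq g^{-1}(C_Q)$. One small slip: $S=\mathbb Z_{\le 0}\times\{0\}$ is infinite, not finite, but it is closed and discrete, so a subarc avoiding it still exists and nothing in your argument changes.
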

\begin{proof}
 Since $\mathcal V$ is infinite, $\tilde{\cV}$ contains a subset $I$ which is homeomorphic to an open interval.  Since any complex curve has only finitely many singular points, we may choose a smooth point $x$ of $C_P$ lying in $I$.  By the implicit function theorem, there exists an open subset $U \subseteq C_P$ (in the complex analytic topology) containing $x$, an open disc $D \subseteq \mathbb C$,  and a biholomorphic map $\phi: D \rightarrow U$.  By shrinking $U$ if necessary we  may also assume that  $f(U) \subseteq (\mathbb C  \smallsetminus \mathbb Z_{\leq 0})^{2}$. We now note that $(Q \circ g \circ q) \circ \phi$ vanishes on $\phi^{-1}(I \cap U)$; by the identity theorem, this function must vanish on $D$ as well.  So $Q \circ g \circ q$ vanishes on $U$ and hence $U \subseteq A$. 
This implies that the analytic subset $A$ of $C_P$ is not thin,  
and so we must have $A=C_P$. Therefore $q(C_P)\subseteq \hat{C}_Q$ and so $\Gamma^2(f(C_P)) \subseteq \hat{C}_Q $ which in turn implies that $\Gamma^2(f(C_P) \cap (\mathbb C  \smallsetminus \mathbb Z_{\leq 0})^{2} ) \subseteq g^{-1}(C_Q)$, proving that  $f(C_P)$ is bialgebraic for $\Gamma^2$. 
\end{proof}

\subsection{Application of Theorem \ref{complexgt}}

By Theorem \ref{complexgt} and Proposition \ref{lem:realcompbalg}, we conclude that $f(C_P)$ must equal a set of the form 
\begin{enumerate}
\item $\{(x, y) \in \mathbb C^2: x=c\}$ for some $c \in \mathbb C$.  

\item  $\{(x, y) \in \mathbb C^2: y=c\}$ for some $c \in \mathbb C$.  

\item  $\{(x, y) \in \mathbb C^2: x=y\}$. 
\end{enumerate}

Since $f(\mathcal V) \subseteq f(C_P)$, we deduce that $f(\mathcal V)$ is contained in a set of the form described above.

\subsection{Conclusion} We note that $f(\mathcal V)$ cannot be contained in a horizontal or vertical line; indeed, in that case there would be a $c =a+i b \in \mathbb C$ such that  every $(x, y) \in \mathcal V$ satisfies $x \pm iy=a+ib$, contradicting the assumption that $\mathcal V$ is not a point. Thus, we must have that $f(\mathcal V) \subseteq \{(x, y) \in \mathbb C^2: x=y\}$, which implies that $x+iy=x-iy$ for all $(x, y) \in \mathcal V$. This forces $y=0$ and so $\mathcal V$ must be contained in, and hence equal to the $x$-axis by Bézout's theorem. 
Conversely, using Equation \eqref{conj}, we see that the image of the $x$-axis is contained in the $x$-axis, so the $x$-axis is indeed weakly bialgebraic. This completes the proof of Theorem \ref{mainthm}.

\section{The image of the $x$-axis under $G$}

\begin{proposition} \label{complete}
Let $\mathcal V$ denote the $x$-axis in $\mathbb R^2$. We have that 
$$G(\cV \cap ( \mathbb R^2  \smallsetminus S)) = \{ (x, 0) \in \mathbb R^2: x \in \mathbb R,  x \neq 0\}.$$
\end{proposition}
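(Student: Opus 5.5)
The plan is to identify $G$ restricted to the $x$-axis with the real Gamma function. Then the statement reduces to showing that $\Gamma$ maps $\mathbb R\smallsetminus\mathbb Z_{\leq 0}$ onto $\mathbb R\smallsetminus\{0\}$.

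First, by Equation \eqref{conj}, for real $x\notin\mathbb Z_{\leq 0}$ we have $\Gamma(x)\in\mathbb R$. Hence $G(x,0)=(\Gamma(x),0)$, and the left-hand side equals $\{(\Gamma(x),0): x\in\mathbb R\smallsetminus\mathbb Z_{\leq 0}\}$.

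For the inclusion $\subseteq$, we use that $\Gamma$ has no zeros. This follows from the reflection formula $\Gamma(z)\Gamma(1-z)=\pi/\sin(\pi z)$: the right-hand side never vanishes, and at points where $\Gamma(1-z)$ has a pole the relation still forces $\Gamma(z)\neq 0$. Alternatively, one can use that $1/\Gamma$ is entire with zeros exactly at $\mathbb Z_{\leq 0}$.

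For the inclusion $\supseteq$, we use continuity and the intermediate value theorem on suitable intervals.
\begin{itemize}
\item On $(0,\infty)$: $\Gamma$ is continuous, $\Gamma(x)=\Gamma(x+1)/x\to+\infty$ as $x\to 0^+$, and $\Gamma(n+1)=n!\to+\infty$. We also need values arbitrarily close to $0$. On $(0,\infty)$ the minimum of $\Gamma$ is about $0.885$, so small positive values must come from the negative axis instead.
\item On $(-1,0)$: $\Gamma(x)=\Gamma(x+1)/x$ is negative, tends to $-\infty$ as $x\to 0^-$ and to $-\infty$ as $x\to -1^+$, so this interval only covers $(-\infty,-c]$ for some $c>0$.
\item On $(-n-1,-n)$ in general: the sign of $\Gamma$ is $(-1)^{n+1}$ and $|\Gamma|\to\infty$ at both endpoints. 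By the reflection formula, $|\Gamma(x)|=\pi/(|\sin\pi x|\,|\Gamma(1-x)|)$. At $x=-n-\tfrac12$ this gives $|\Gamma(x)|\leq \pi/\Gamma(n+\tfrac32)\to 0$ as $n\to\infty$.
\end{itemize}
So on each interval $(-n-1,-n)$, the intermediate value theorem shows the image contains the full ray from $\Gamma(-n-\tfrac12)$ out to $\pm\infty$ with sign $(-1)^{n+1}$. Taking the union over $n$ even covers all negative reals, and over $n$ odd all positive reals, since $|\Gamma(-n-\tfrac12)|\to 0$.

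The main point needing care is this last step: showing that arbitrarily small values of both signs are attained. The explicit bound from the reflection formula handles it. The remaining work, namely the blow-up of $\Gamma$ at the poles with the stated signs, follows routinely from $\Gamma(x)=\Gamma(x+n+1)/(x(x+1)\cdots(x+n))$.
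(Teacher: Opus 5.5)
Your proof is correct and follows essentially the same route as the paper: reduce to showing $\Gamma(\mathbb R\smallsetminus\mathbb Z_{\le 0})=\mathbb R\smallsetminus\{0\}$, observe that $(0,\infty)$ misses small values, and on the negative intervals combine sign alternation, blow-up at the poles, the reflection formula and continuity. Your evaluation $|\Gamma(-n-\tfrac12)|=\pi/\Gamma(n+\tfrac32)$ is a concrete form of the paper's claim that the extremal values $M_n, m_n$ tend to $0$, and you also spell out the non-vanishing of $\Gamma$ needed for the inclusion $\subseteq$, which the paper leaves implicit.
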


\begin{proof}
It suffices to show that $\Gamma(\mathbb R \smallsetminus \mathbb Z_{\leq 0})=\mathbb R \smallsetminus \{0\} $. 
Since $\Gamma$ has a global minimum on the interval $(0, \infty)$ where it achieves the value $\approx 0.88 $, we instead study its behaviour on the negative real axis to prove Proposition \ref{complete}. 
We recall the reflection formula $\Gamma(x)=\frac{\pi}{\sin(\pi x)\Gamma(1-x)}$, which is valid for all $x \not \in \mathbb Z$. It follows that for each $n \geq 0$, $\Gamma(x)$ tends to $-\infty$ (resp. $+\infty$)  as $x$ approaches either of the end-points of the interval $(-2n-1, -2n)$ (resp. $(-2n-2, -2n-1))$.  Let $M_n$ denote the global maximum  of $\Gamma(x)$ on $(-2n-1, -2n)$ and $m_n$ denote the global minimum of $\Gamma(x)$ on  $(-2n-2, -2n-1)$; from the reflection formula and the fact that $\Gamma(x) \to  \infty$ as $x \to + \infty$, it follows that $M_n \to 0^{-}$ and $m_n \to 0^+$ as $n \to \infty$. Since $\Gamma$ is continuous on each of the above intervals,  we conclude that the image of $\mathbb R_{\leq 0} \smallsetminus \mathbb Z_{\leq 0}$ under $\Gamma$ equals $\mathbb R \smallsetminus \{0\}$. 
\end{proof}

In particular, Proposition \ref{complete} implies that the image of the $x$-axis is a semialgebraic set. This is in harmony with the real geometric transcendence results of \cite{Tam23, ST25} where images of weakly bialgebraic sets were often also semialgebraic.

\begin{remark}
The algebraic rigidity of the x-axis under $G$ is in sharp contrast with the behaviour of the purely imaginary y-axis. Setting $z=it$ in the reflection formula for $t \in \R$, using Equation \eqref{conj} and the equation $\Gamma(z+1)=z\Gamma(z)$,  it follows that 
$
|\Gamma(it)|^2 = \frac{\pi}{-it \sin(\pi i t)} = \frac{\pi}{t \sinh(\pi t)}. 
$
Thus, $|\Gamma(it)|$ decays exponentially to zero as $t \to \infty$. By Stirling's approximation for $\log(\Gamma(z))$ (see for instance \cite[Proposition 9.6.27]{Coh07}),  it follows that $\Im(\log \Gamma (it))$ is asymptotic to $t \log t$ as $t \to \infty$. Since this imaginary part is unbounded, the image of the $y$-axis under $G$ intersects the $x$-axis infinitely many times as it spirals around, and converges exponentially to, the origin. Suppose that the image of the $y$-axis under $G$ is contained in the zero set of some non-zero $P \in \R[X, Y]$. Upon factoring $P=P_1 \cdots P_n$ into irreducible factors and applying the identity theorem, it follows that there exists $i \in \{1, \ldots, n\}$ such that $P_i$ vanishes on the image of the $y$-axis. Since this image also intersects the $x$-axis infinitely many times, Bézout's theorem implies that $P_i$ is a scalar multiple of $Y$; in other words, the image of the $y$-axis under $G$ should be contained in the $x$-axis, yielding a contradiction. Thus, the $y$-axis in $\mathbb R^2$ provides a concrete geometric illustration of a transcendental image under $G$.
\end{remark}

\section{Manin--Mumford analogues for $\Gamma$}
\label{mm}

\subsection{Recollection of general set-up} \label{gensetup} Let $f$ be a transcendental meromorphic function on $\mathbb C$ and let $S \subset \mathbb C$ be its set of poles. A point $a \in \mathbb C$ is called \textit{$f$-special} if $a \in \overline \Q$ and $a=f(b)$ for some $b \in \overline \Q$. A point $(a_1, a_2) \in \mathbb C^2$ is called \textit{f-special} if both $a_1$ and $a_2$ are $f$-special. If $\Omega_f: (\mathbb C \smallsetminus S)^2 \rightarrow \mathbb C^2$ is given by $(z_1, z_2) \mapsto (f(z_1), f(z_2))$, then an irreducible algebraic curve $W \subseteq \mathbb C^2$ is called $f$-special if there exists an algebraic curve $V \subseteq \mathbb C^2$ such that $V \cap (\mathbb C \smallsetminus S)^2$ is Zariski dense in $V$ and $\Omega_f(V \cap (\mathbb C \smallsetminus S)^2 ) \subseteq W$  . A \textit{Manin--Mumford type conjecture} in this setting predicts that if an algebraic curve $W \subseteq \mathbb C^2$ contains infinitely many $f$-special points, then $W$ must be $f$-special.

\begin{example}
If $f=\exp(z)$, then by the Gelfond--Schneider theorem, the $f$-special points are exactly the roots of unity in $\mathbb C$. It follows from the discussion in \S \ref{intro} that the $f$-special curves in $\mathbb C^2$ are precisely curves defined by  $X^m Y^n=\alpha$, where $m , n \in \mathbb Z$ and $(m, n) \neq (0, 0)$, and $\alpha \in \mathbb C^{\times}$. It was shown by Lang \cite{Lan65} (with independent proofs by Serre, Ihara and Tate) that if a curve in $\mathbb C^2$ contains infinitely many $f$-special points, then it must be $f$-special with the corresponding $\alpha$ being a root of unity.  
\end{example}

We refer to the survey article \cite{KUY18} for other examples, and generalisations, of Manin--Mumford type conjectures.

\subsection{Complex analogue for $\Gamma$} In view of the discussion in Section \ref{gensetup}, a point $a$ in $\mathbb C$ is called $\Gamma$-special if $a \in \overline{\mathbb Q}$ and if $a=\Gamma(b)$ for some $b \in \overline{\mathbb Q}$. Since $\Gamma(n)=(n-1)!$ for all $n \in \mathbb Z_{\geq 1}$, it follows that the factorials are $\Gamma$-special. As stated in \cite[page 239]{Riv12} in connection with the Lang--Rohrlich conjecture on the transcendence properties of the Gamma function, the converse is also believed to be true, \textit{i.e.}, the only algebraic numbers which are $\Gamma$-special are the factorials. 
We record this conjecture as follows. 

\begin{conjecture} \label{bialg}
Suppose $z \in \overline{\mathbb Q}$. Then $\Gamma(z) \in \overline{\mathbb Q}$ if and only if $z \in \Z_{\geq 1}$. 
\end{conjecture}

For example, $\Gamma(1/2)=\sqrt{\pi}$, and $\Gamma(1/3)$ and $\Gamma(1/4)$ are known to be transcendental by the work of Chudnovsky \cite{Chu76},  but the transcendence of $\Gamma(1/5)$ is still open.

We now proceed to study analogues of Manin--Mumford type conjectures for the Gamma function.  We first recall the  background about the asymptotic behaviour of algebraic curves from \cite[Section 3]{Wal92} that we shall use in our study.



\begin{remark}[Asymptotic behaviour of algebraic curves]\label{asymp}

Let $F(X, Y) \in \Z[X, Y]$ be an integer polynomial with positive degree in both $X$ and $Y$, which is irreducible in $\Q[X, Y]$. Write
$$
F(X, Y)= A_n(X) Y^n+A_{n-1}(X)Y^{n-1}+ \cdots +A_0(X). 
$$
Puiseux's theorem asserts the existence of $n$ distinct formal series 
$
Y_i(X) = \sum_{k=-f_i}^{\infty} c_{k, i} X^{-k/e_i}, 
$
 where each $e_i$ is a positive integer, each $f_i$ is an integer chosen such that $c_{-f_i, i} \neq 0$ and the $c_{k, i} \in \mathbb C$, such that 
$\displaystyle{
F(X, Y) = A_n(X) \prod_{i=1}^n (Y-Y_i(X))}$ as formal power series. As explained in \cite[page 162]{Wal92}, there exists  $R \in \mathbb R_{>0}$ such that each $Y_i(X)$ converges when $|X|>R$. It follows that there exists $R' \in \mathbb R_{>0}$  such that if  $(x, y) \in \mathbb C^2$ satisfies $F(x, y)=0$ with $|x|>R'$, then there exists $i \in \{1, \ldots, n\}$ such that $y=Y_i(x)$. This result describes the asymptotic behaviour of points $(x, y)$ lying on the curve $F=0$ as $x \to \infty$; for sufficiently large $x$, the points on the curve lie on graphs of finitely many fractional power series. 

\end{remark}

\begin{notation}
Let $(a_n)_n$ and $(b_n)_n$ be two sequences of positive real numbers. We say that $a_n \sim b_n$ if $\lim_{n \to \infty} \frac{a_n}{b_n}=1$, $a_n=O(b_n)$ if there exists a constant $C > 0$ such that $a_n \leqslant C b_n$ for all $n$ sufficiently large and $a_n= o(b_n)$ if  $\lim_{n \to \infty} \frac{a_n}{b_n}=0$. 
\end{notation}

\begin{proposition} \label{factorials}
Suppose $P \in \mathbb C[X, Y]$ is an irreducible polynomial such that it has infinitely many distinct roots of the form $(m!, n!)$ where $m, n \geqslant 1$. Then the curve $C_P \subseteq \mathbb C^2$ defined by $P=0$ must be a vertical line, a horizontal line or the diagonal in $\mathbb C^2$. 
\end{proposition}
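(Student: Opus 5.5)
Our plan is to reduce first to a polynomial with integer coefficients, and then compare the growth of $m!$ and $n!$ along the curve using the Puiseux expansions of Remark \ref{asymp}.

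\textbf{Reduction to a rational polynomial and trivial cases.} Since $C_P$ contains infinitely many points with rational coordinates, $P$ is a scalar multiple of a polynomial with rational coefficients. Indeed, the linear conditions $P(m!,n!)=0$ on the coefficients of polynomials of degree $\le\deg P$ are defined over $\Q$. If the solution space $L$ were not spanned by rational vectors, a Galois conjugate would give a second polynomial vanishing at the same infinitely many points and not proportional to $P$. By Bézout this is impossible, since $P$ is irreducible. So we may assume $P\in\Z[X,Y]$, irreducible in $\Q[X,Y]$ (irreducibility over $\mathbb C$ then gives it over $\Q$).

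If $P$ has degree $0$ in $Y$, then $P=P(X)$. Since $P$ is irreducible and has a root $m!$, it equals $X-m!$ up to a scalar, so $C_P$ is a vertical line. The case of degree $0$ in $X$ is symmetric and gives a horizontal line. Hence we may assume $P$ has positive degree in both variables, so Remark \ref{asymp} applies to $P$ and, symmetrically, with the roles of $X$ and $Y$ exchanged.

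\textbf{Growth comparison.} Let $(m_k!,n_k!)$ be infinitely many distinct roots. For fixed $m$ there are only finitely many $n$ with $P(m!,n!)=0$, because $P(m!,Y)$ is not identically zero: otherwise $X-m!$ would divide $P$. Hence $m_k\to\infty$, and symmetrically $n_k\to\infty$.

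By Remark \ref{asymp}, for large $k$ we have $n_k!=Y_i(m_k!)$ for some $i$. After passing to a subsequence we may fix $i$, so that $n_k!\sim c\,(m_k!)^{f/e}$ with $c\neq 0$ and $f/e$ a rational number. Since $n_k!\to\infty$, we have $f/e>0$. Applying the same argument with $X$ and $Y$ swapped gives the two-sided comparison $\log n_k!=\lambda\log m_k!+O(1)$, where $\lambda=f/e$ is a positive rational.

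\textbf{Forcing $\lambda=1$ and the diagonal (main obstacle).} The key step is to show that $\lambda=1$ and then that $m_k=n_k$. Put $\Delta=n_k-m_k$, and suppose first $\Delta\ge 1$. Then $n_k!/m_k!\ge m_k+1\to\infty$. If $\lambda=1$, this contradicts $n_k!/m_k!\to |c|$. Now suppose $\lambda\ne 1$, say $\lambda>1$. Then $\log n_k!-\log m_k!=(\lambda-1)\log m_k!+O(1)$, which is of size about $(\lambda-1)m_k\log m_k$. On the other hand, this difference is a sum of $\Delta$ terms, each about $\log m_k$, so $\Delta\approx(\lambda-1)m_k$.

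This is the delicate point. We will try to rule out a non-integer $\lambda$ using the fact that the $O(1)$ error in the Puiseux expansion is in fact $c+o(1)$. Write $\log n!=\lambda\log m!+\log c+o(1)$ and use Stirling's formula. The leading term $n\log n-n$ forces $n\approx \lambda m$. For the lower-order terms, the term $\frac12\log n$ on the left must match $\frac{\lambda}{2}\log m$ on the right, up to the corrections coming from $n-\lambda m$. Expanding carefully, the $\log m$ coefficients can only match if $\lambda=1$.

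Once $\lambda=1$, the argument of the previous paragraph shows $n_k!/m_k!$ is eventually bounded and bounded away from $0$. For large $m_k$ this forces $n_k=m_k$. So infinitely many roots lie on the diagonal $X=Y$. By Bézout, $P$ is then a scalar multiple of $X-Y$, and $C_P$ is the diagonal.

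The main obstacle is the Stirling bookkeeping needed to exclude $\lambda\neq 1$, especially rational non-integer values of $\lambda$. If the Stirling comparison turns out to be too weak, we would fall back on comparing the arithmetic of the two sides. The identity $n!=c\,(m!)^{\lambda}(1+o(1))$ together with the other Puiseux coefficients could be contrasted with the valuation of a large prime $p$ with $m<p\le n$: such a $p$ divides $n!$ but not $m!$, and it exists by Bertrand's postulate once $n\ge 2m$.
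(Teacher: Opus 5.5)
Everything up to $n_k!\sim c\,(m_k!)^{\lambda}$ is fine and agrees with the paper, and so is the last step from $\lambda=1$ to $n_k=m_k$ and then Bézout. The gap is the step you flag yourself: Stirling bookkeeping cannot force $\lambda=1$. Write $\phi(x)=x\log x-x$. Stirling turns $\log n_k!=\lambda\log m_k!+\log c+o(1)$ into $\phi(n_k)-\phi(\lambda m_k)=-\lambda m_k\log\lambda+O(\log m_k)$, that is, $n_k-\lambda m_k\approx-\lambda m_k\log\lambda/\log m_k$. For $\lambda\neq 1$ this is an unbounded but perfectly consistent correction. There is no expansion of $n_k$ in $m_k$ whose $\log m_k$-coefficients you could match, because $n_k$ is a free integer. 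Moving $n_k$ by $1$ changes $\log n_k!$ by about $\log m_k$, so the mismatch $\tfrac{1-\lambda}{2}\log m_k$ between the $\tfrac12\log$ terms is absorbed by an $O(1)$ shift of $n_k$. What remains is the requirement that $\log n!$ come within $o(1)$ of $\lambda\log m!+\log c$ infinitely often. That is a Diophantine condition, and real asymptotics alone cannot refute it. Your fallback does not close the gap either. All terms of $P(m_k!,n_k!)$ except $P(m_k!,0)$ are multiples of $n_k!$, so a large prime $p\in(m_k,n_k]$ only gives $p\mid P(m_k!,0)$ (indeed $n_k!\mid P(m_k!,0)$). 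That bounds $\lambda$ above by $\deg P(X,0)$ but says nothing about $\lambda=1$.

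The missing idea is an arithmetic input that pins $n_k-\lambda m_k$ down to $O(\log m_k)$. It comes from a \emph{fixed} small prime, not a large one. Write $P=\sum c_{ij}X^iY^j\in\Z[X,Y]$ with $P(m_k!,n_k!)=0$. The ultrametric inequality forces two distinct monomials to share the minimal $p$-adic valuation, and after passing to a subsequence the same pair $(i_1,j_1),(i_2,j_2)$ works for all $k$. Legendre's formula $v_p(x!)=(x-s_p(x))/(p-1)$, with $s_p(x)=O(\log x)$, turns this into $(i_1-i_2)m_k+(j_1-j_2)n_k=O(\log m_k)$. Unboundedness of $m_k$ and $n_k$ gives $i_1\neq i_2$ and $j_1\neq j_2$. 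Comparing with $n_k\sim\lambda m_k$ then gives $\lambda=(i_2-i_1)/(j_1-j_2)$ and $n_k-\lambda m_k=O(\log m_k)$. Only at this point does Stirling bite: substituting $n_k=\lambda m_k+O(\log m_k)$ yields $\lambda m_k\log\lambda=O((\log m_k)^2)$, hence $\lambda=1$. This is how the paper argues. Without some input that uses the integrality of the coefficients and the exact vanishing of $P$, your argument does not close.
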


\begin{proof}
Let the infinite sequence of roots be denoted as $(m_k!, n_k!)$ for $k=1, 2, \ldots$. Since $P$ is irreducible and has infinitely many integral solutions, by a standard descent argument (see, for instance, \cite[page 230]{Lan65}) we may assume that $P \in \Q[X, Y]$.  By clearing denominators, we may further assume that $P \in \Z[X, Y]$.

We first assume that the sequence $(m_k)_k$ is bounded. 
Then there exists $m_0 \geqslant 1$ such that $m_k=m_0$ for infinitely many $k$. By applying the division algorithm in $\mathbb C[X, Y]=\mathbb C[Y][X]$, we obtain that $P=(X-m_0!)Q+R$ for some $Q \in \mathbb C[X, Y]$ and $R \in \mathbb C[Y]$. Upon substituting $X=m_0!$ and $Y=n_k!$ for the infinitely many $k$ for which $m_k=m_0$, it follows that the polynomial $R$ has infinitely many roots and so must be zero. Since $P$ is irreducible, it follows that $C_P$ is the zero locus of $X-m_0!$ and so is a vertical line. A similar argument shows that if $(n_k)_k$ is bounded, then $C_P$ must be a horizontal line. 
\\
We now assume that both $(n_k)_k$ and $(m_k)_k$ are unbounded.  We will show that $n_k=m_k$ for infinitely many $k$. This will imply, by Bézout's theorem, that $C_P$ must be the diagonal. The discussion in Remark \ref{asymp} implies that, by passing to an infinite subsequence if necessary and relabeling indices,  there exists $r \in \Q$ and $c \in \mathbb C$ such that $n_k! \sim c (m_k!)^r$ as $k \to \infty$ . Note that we must necessarily have $c \in \R_{>0}$ and $r>0$.   
Taking logarithms yields 
\begin{equation} \label{pl}
\log(n_k!)= r \log(m_k!)+ \log c+o(1). 
\end{equation}

Applying Stirling's approximation  $\log(x!)=x \log x-x+O(\log x)$ to  \eqref{pl} implies
\begin{equation} \label{pl2}
n_k \log n_k - n_k = r m_k \log m_k - rm_k +O(\log n_k) +O(\log m_k). 
\end{equation}

We conclude in particular that $n_k \log n_k \sim r m_k \log m_k$. Applying logarithms again yields $\log n_k \sim \log m_k$ and so, by applying this back in $n_k \log n_k \sim r m_k \log m_k$, we obtain  $n_k \sim r m_k$ as $k \to \infty$.     

We now employ a $p$-adic argument to estimate the difference $n_k - r m_k$ along a suitable infinite subsequence. 
Let $P(X, Y)=\sum_{i, j} c_{i, j} X^i Y^j$ and for a fixed prime $p$, consider the $p$-adic valuation of each evaluated monomial $v_p(c_{i, j} (m_k!)^{i} (n_k!)^{j}) = v_p(c_{i, j})+i v_p(m_k!)+ j v_p(n_k!)$. 
Since $P(m_k!, n_k!)=0$ for each $k \geqslant 1$, the minimal $p$-adic valuation of at least two evaluated monomials must be equal. Thus, there is a specific pair of monomials, say with exponents $(i_1, j_1)$ and $(i_2, j_2)$, that have equal minimum evaluated valuation for an infinite subsequence. By relabeling our indices, we may again assume without loss of generality that this equality holds for all $k$ in our sequence.  By Legendre's formula, $v_p(x!)=\frac{x-s_p(x)}{p-1}$, where $s_p(x)$ is the sum of the digits in the base-$p$ expansion of $x$;  note that $s_p(x)=O(\log x)$. Inserting this into 
$v_p(c_{i_1, j_1} (m_k!)^{i_1} (n_k!)^{j_1}) 
= v_p(c_{i_2, j_2} (m_k!)^{i_2} (n_k!)^{j_2})$,  we obtain $i_1 m_k+j_1 n_k =i_2 m_k+j_2 n_k+O(\log n_k)+O(\log m_k)$. We now note that $\log n_k = O(\log m_k)$ and also that $j_1 \neq j_2$,  since otherwise $m_k=O(\log m_k)$ contradicting the fact that $(m_k)_k$ is unbounded.  A similar argument shows that $i_1 \neq i_2$. We now obtain that $n_k+ \frac{i_1-i_2}{j_1-j_2} m_k= O(\log m_k)$, which forces $r= \frac{i_2-i_1}{j_1-j_2}$ using again the fact that $n_k \sim r m_k$.  
In summary, if we let $E_k = n_k -r m_k$, then we have $E_k = O(\log m_k)$.

Substituting $n_k=rm_k+E_k$ and using $\log n_k=O(\log m_k)$ in \eqref{pl2} we obtain 
\begin{equation} \label{pl3}
(rm_k+E_k) \log(rm_k+E_k)-E_k - rm_k \log m_k = O(\log m_k). 
\end{equation}
Writing $\log(rm_k+E_k)= \log(rm_k)+ \log(1+ E_k/r m_k)= \log (rm_k)+O \left( \frac{E_k}{r m_k} \right)$ and using $E_k = O(\log m_k)$,  \eqref{pl3} simplifies to $r m_k \log r= O((\log m_k)^2)$ and hence to  
$\displaystyle{
r \log r= O \left( \frac{(\log m_k)^2}{m_k} \right).}$  Since the right hand side tends to zero as $k \to \infty$, it follows that $r=1$ and so $n_k! \sim c m_k!$ as $k \to \infty$,  \textit{i.e.},  $\lim_{k \to \infty} \frac{n_k!}{m_k!}=c$. Note that if $n_k> m_k$ for infinitely many $k$, then $\frac{n_k!}{m_k!} \geqslant m_k+1 \to \infty$, a contradiction. Similarly, if $n_k < m_k$ for infinitely many $k$, then $\frac{n_k!}{m_k!} \leqslant \frac{1}{n_k+1} \to 0$, a contradiction. 
Thus, we must have that $n_k=m_k$ for all $k$ sufficiently large as desired. 
\end{proof}

Thus, conditional on Conjecture \ref{bialg}, Theorem \ref{complexgt} and Proposition \ref{factorials} imply that the analogue of the Manin--Mumford conjecture holds for (products of) the Gamma function.  Subsequent to the completion of the present paper, the author was informed that Abhishek Oswal and Roy Zhao have obtained independent proofs of Proposition \ref{factorials}.

\subsection{Real analogue for $\Gamma$}  We call a point $(x, y)$ in $\mathbb R^2$ $\Gamma$-special if $(x, y) \in \overline \Q^2$ and $(x, y)= G( (a, b)) $ for some $(a, b) \in \mathbb R^2 \cap \overline \Q^2$; note that these correspond precisely to the $\Gamma$-special points of $\mathbb C$ under the natural identification $\mathbb R^2 \simeq \mathbb C$.  We call an irreducible real algebraic curve $W$ in $\mathbb R^2$ $\Gamma$-special if $W \supseteq G(\cV \cap (\mathbb R^2 \smallsetminus S))$ for a weakly bialgebraic curve $\cV \subseteq \mathbb R^2$. Conjecture \ref{bialg} implies that the only $\Gamma$-special points in $\mathbb R^2$ are of the form $(n!, 0)$ for $n \geq 1$. On the other hand, Theorem \ref{mainthm} and Proposition \ref{complete} imply that the $x$-axis is the only $\Gamma$-special real algebraic curve in $\mathbb R^2$. Conditional on Conjecture \ref{bialg}, we thus obtain the following real analogue of the Manin--Mumford conjecture for the Gamma function: \textit{suppose $W$ is an irreducible real algebraic curve in $\R^2$ containing infinitely many $\Gamma$-special points.  Then $W$ must be $\Gamma$-special}.

\end{document}